\numberwithin{equation}{section}
\newtheorem{theorem}[equation]{Theorem}
\newtheorem{lemma}[equation]{Lemma}
\newtheorem{corollary}[equation]{Corollary}
\newtheorem{notation}[equation]{Notation}
\theoremstyle{definition}
\newtheorem{rmk}[equation]{Remark}
\newenvironment{remark}[1][]{\begin{rmk}[#1] \pushQED{\qed}}{\popQED \end{rmk}}
\newtheorem{eg}[equation]{Example}
\newenvironment{example}[1][]{\begin{eg}[#1] \pushQED{\qed}}{\popQED \end{eg}}
\newtheorem{defn}[equation]{Definition}
\newenvironment{definition}[1][]{\begin{defn}[#1]\pushQED{\qed}}{\popQED \end{defn}}
\newtheorem{ques}[equation]{Question}
\newcommand{\ZZ}{\mathbb{Z}}
\newcommand{\CC}{\mathbb{C}}
\renewcommand{\phi}{\varphi}
\renewcommand{\emptyset}{\varnothing}
\renewcommand{\tilde}[1]{\widetilde{#1}}
\newcommand{\setst}[2]{\left\{ #1 \mid #2 \right\}}
\newcommand{\comment}[1]{}
\def\Ddots{\mathinner{\mkern1mu\raise\p@
\vbox{\kern7\p@\hbox{.}}\mkern2mu
\raise4\p@\hbox{.}\mkern2mu\raise7\p@\hbox{.}\mkern1mu}}
\newcommand{\Hom}{\operatorname{Hom}}
\DeclareMathOperator{\supp}{Supp}
\DeclareMathOperator{\rep}{rep}
\newcommand{\onto}{\twoheadrightarrow}
\newcommand{\RR}{\mathbb{R}}
\newcommand{\ddim}{\underline{\dim}\,}
\newcommand{\kk}{\Bbbk}
\newcommand{\ses}[3]{0 \to #1 \to #2 \to #3 \to 0}
\begin{document}
\title[Total stability for type $\mathbb{A}$ quivers]{Total stability functions for type $\mathbb{A}$ quivers}
\author{Ryan Kinser}
\address{University of Iowa, Department of Mathematics, Iowa City, USA}
\email[Ryan Kinser]{ryan-kinser@uiowa.edu}

\begin{abstract}
For a quiver $Q$ of Dynkin type $\mathbb{A}_n$, we give a set of $n-1$ inequalities which are necessary and sufficient for a linear stability condition (a.k.a. central charge)
$Z\colon K_0(Q) \to \mathbb{C}$ to make all indecomposable representations stable.
We furthermore show that these are a minimal set of inequalities defining the space $\mathcal{TS}(Q)$ of total stability conditions, considered as an open subset of $\mathbb{R}^{Q_0} \times (\mathbb{R}_{>0})^{Q_0}$.
We then use these inequalities to show that each fiber of the projection of $\mathcal{TS}(Q)$ to $(\mathbb{R}_{>0})^{Q_0}$ is linearly equivalent to $\RR \times \RR_{>0}^{Q_1}$.
\end{abstract}

\makeatletter
\@namedef{subjclassname@2020}{%
  \textup{2020} Mathematics Subject Classification}
\makeatother

\subjclass[2020]{
16G20, 05E10}

\keywords{
quiver representation, stability condition, central charge, total stability, type A}

\maketitle


\section{Introduction}
\subsection{Problem statement}
Let $Q$ be an acyclic quiver and fix an algebraically closed field $\kk$ over which all representations are taken.
A detailed recollection of terminology used in the introduction is found in Section \ref{sec:background}.

This paper concerns stability conditions for quivers as in \cite{Schofield91, King94, Rudakov97, Bridgeland07}.  We restrict our attention to \emph{linear stability conditions}, 
meaning those given by a group homomorphism $Z \colon K_0(Q) \to \CC$ such that $Z(M)$ has positive imaginary part when $M$ is a nonzero representation. 
Such a function is also known as a \emph{central charge}. 
Identifying $K_0(Q) \simeq \ZZ^{Q_0}$ by taking classes of simple representations as a basis, a stability condition can be written as
\begin{equation}\label{eq:Zform}
Z(x) = w\cdot x + (r\cdot x) i \qquad\text{for some } w \in \RR^{Q_0},\ r \in (\RR_{>0})^{Q_0}.
\end{equation}
Thus we can identify the space of stability conditions with $\RR^{Q_0} \times (\RR_{>0})^{Q_0}$. 
Such $Z$ determines the \emph{slope function}
\begin{equation}\label{eq:mudef}
\mu_Z \colon K_0(Q) \to \RR, \qquad \mu_Z(x) = \frac{w\cdot x}{r\cdot x}.
\end{equation}
(Note that this is the reciprocal of the slope of the line through the origin and $\mu_Z(x)$ if we take the standard convention of drawing the complex plane.)

A representation of $Q$ is called $Z$-\emph{stable} if $\mu_Z(W) < \mu_Z(V)$ for all nonzero, proper subrepresentations $0< W<V$.
Notice that by taking $Z$ as a variable, each $\mu_Z(W) < \mu_Z(V)$ is a quadratic inequality on the space of stability conditions
$\RR^{Q_0} \times (\RR_{>0})^{Q_0}$.
Stability of quiver representations has connections with many other notions in mathematics and mathematical physics, such as moduli spaces of representations, semi-invariants, Harder-Narasimhan filtrations, and green sequences and paths.
We refer the reader to \cite{Igusa-linearity} and the references therein for more detail about these connections.

\medskip

This work investigates the set of stability conditions $Z$ such that every indecomposable representation of $Q$ is $Z$-stable.  This immediately restricts our attention to $Q$ of Dynkin type since stable representations have 1-dimensional endomorphism ring.

\begin{definition}\label{def:weights}
A stability condition $Z$ for a quiver $Q$ is \emph{totally stable}, or a \emph{total stability condition}, if every indecomposable representation of $Q$ is $Z$-stable.  The \emph{set of total stability conditions} for $Q$ is denoted
\begin{equation}
\mathcal{TS}(Q)=\setst{Z \in \Hom_\ZZ(K_0(Q), \CC)}{Z \text{ is a total stability condition}},
\end{equation}
and identified with an open subset of $\RR^{Q_0} \times (\RR_{>0})^{Q_0}$.
\end{definition}

The main results of this paper are about quivers of \emph{Dynkin type $\mathbb{A}$}.  
This means that the underlying undirected graph is of the form
\begin{equation}\label{eq:typeA}
\vcenter{\hbox{\begin{tikzpicture}[point/.style={draw,shape=ellipse,scale=.9pt},>=latex]
   \node[point] (1) at (0,0) {$1$};
  \node[point] (2) at (1,0) {$2$};
   \node[point] (3) at (2,0) {$3$};
   \node[outer sep=-2pt] (3a) at (3,0) {};
   \node[outer sep=-2pt] (d) at (3.5,0) {${\cdots}$};
   \node[outer sep=-2pt] (4a) at (4,0) {};
  \node[point] (n) at (5,0) {$n$};
  \path[-]
  	(2) edge node[auto] {} (1) 
	(3) edge node[below] {} (2) 
	(3a) edge node[below] {} (3) 
	(4a) edge node[auto] {} (n); 
	   \end{tikzpicture}}},
\end{equation}
and we say the quiver is of type $\mathbb{A}_n$ if we want to specify that it has $n$ vertices.
For \emph{equioriented} type $\mathbb{A}$ quivers, meaning all arrows point in the same direction, it is easy to describe the set of total stability conditions
due to the fact that the all indecomposable representations are uniserial in this case \cite[Example~A]{ReinekeHNsystem}.

For $Q$ of type $\mathbb{A}$ and arbitrary orientation,
$\mathcal{TS}(Q)$ was recently shown to be nonempty in independent papers of Apruzzese-Igusa \cite{AI} and Huang-Hu \cite{HH}, using quite different methods.  These papers considered the specific case of \emph{standard linear stability conditions} as in \cite{ReinekeHNsystem}, also called \emph{classical slope functions}, where $r=(1,1, \dotsc, 1)$ in \eqref{eq:Zform}.
In \cite{AI} it is proven using a geometric model, and that paper also contains more general results about affine type $\mathbb{A}$ and maximal green sequences.
Another proof was given by a different geometric model in \cite[Thm.~5.3]{BGMS}.
The methods of this paper are independent of the above cited papers,
and describe all total (linear) stability conditions, not just the standard ones.
It would be interesting to interpret our Theorem \ref{thm:main} in the geometrical models referenced above.

\subsection{Results}
The following notation for type $\mathbb{A}$ quivers is useful to organize the proof of the main theorem.  A running example illustrating the notation starts with Example \ref{ex:1}.

\begin{notation}\label{not:embed}
Given a type $\mathbb{A}$ quiver $Q$ as in \eqref{eq:typeA},  recursively define functions $x, y \colon Q_0 \to \RR$ by setting $x(1) = y(1) = 0$, and then for $i > 1$:
\begin{equation}
\begin{cases}
x(i+1) = x(i)+1\ \text{and}\ y(i+1) = y(i)  & \text{if there is an arrow } i \to i+1,\\
x(i+1) = x(i)\ \text{and}\ y(i+1) = y(i)+1 & \text{if there is an arrow }i+1 \to i.
\end{cases}
\end{equation}
(Visually, these give us an embedding  $Q\subset \RR^2$ by specifying the $x, y$-coordinates of the vertices and then connecting them with arrows in the simplest way; see \eqref{eq:ex1Q}).

This determines two sequences of subsets of $Q_0$, which are pairwise disjoint within each sequence:
\begin{equation}
X_k = \setst{z \in Q_0}{ x(z)=k},\quad Y_k = \setst{z \in Q_0}{ y(z)=k}, \quad \text{for }k \in \ZZ_{\geq 1}.
\end{equation}
We furthermore define 
\begin{equation}
\tilde{X}_i := \bigcup_{k=i}^{x(n)} X_{k} \qquad \text{and}\qquad \tilde{Y}_i := \bigcup_{k=i}^{y(n)} Y_{k}
\end{equation}
to get chains of subsets of $Q_0$:
\begin{equation}\label{eq:xychains}
\begin{split}
\tilde{X}_{x(n)} \subset \tilde{X}_{x(n)-1} \subset \dots \subset \tilde{X}_{2} \subset \tilde{X}_{1} \subset \tilde{X}_{0} = Q_0\\
Q_0 = \tilde{Y}_{0} \supset \tilde{Y}_{1} \supset \tilde{Y}_{2} \supset \dots \supset \tilde{Y}_{y(n)-1} \supset \tilde{Y}_{y(n)}.
\end{split}
\end{equation}
\end{notation}

\begin{example}\label{ex:1}
The quiver below shows a type $\mathbb{A}$ quiver embedded in $\RR^2$ as in Notation \ref{not:embed}.
\begin{equation}\label{eq:ex1Q}
Q=\quad\vcenter{\hbox{\begin{tikzpicture}[yscale=0.6,point/.style={draw,shape=ellipse,scale=.9pt},>=latex]
   \node[point] (z1) at (0,0) {$1$};
  \node[point] (z2) at (1,0) {$2$};
   \node[point] (z3) at (2,0) {$3$};
   \node[point] (z4) at (2,2) {$4$};
   \node[point] (z5) at (3,2) {$5$};
   \node[point] (z6) at (4,2) {$6$};
   \node[point] (z7) at (4,4) {$7$};
   \node[point] (z8) at (5,4) {$8$};
  \path[->]
	(z1) edge node[below,pos=0.3] {} (z2)
  	(z2) edge node[below,pos=0.3] {} (z3) 
  	(z4) edge node[below,pos=0.3] {} (z3) 
	(z4) edge node[below,pos=0.3] {} (z5)
	(z5) edge node[below,pos=0.3] {} (z6)
	(z7) edge node[below,pos=0.3] {} (z6)
	(z7) edge node[below,pos=0.3] {} (z8);
   \end{tikzpicture}}}
\end{equation}
The corresponding partitions of $Q_0$ come from vertically and horizontally aligned subsets of $Q_0$:
\begin{equation*}
\begin{split}
X_0=\{1\},\ X_1=\{2\},\ X_2=\{3,4\},\ &X_3=\{5\},\ X_4=\{6,7\},\ X_5=\{8\} \\
Y_0=\{1,2,3\},\ Y_1=\{4,5,6\},\ &Y_2=\{7, 8\}.
\end{split}
\end{equation*}
The chains in \eqref{eq:xychains} come from filtering the vertices by $x$-coordinate and $y$-coordinate respectively:
\begin{equation}
\begin{split}
\{8\} \subset \{6,7,8\} \subset\{5,6,7,8\}& \subset \{3,4,\dotsc,8\} \subset \{2,\dotsc, 8\} \subset Q_0\\
Q_0 \supset \{4,5,6,7,8\} &\supset \{7,8\}. \qedhere
\end{split}
\end{equation}
\end{example}

The main result of the paper is below, characterizing membership in $\mathcal{TS}(Q)$ by a much smaller set of inequalities than the set resulting a priori from the definition.
Here, the notation $[S]$ for $S\subseteq Q_0$ means the indecomposable representation of $Q$ supported on $S$ (see Notation \ref{not:indecomp}).

\begin{theorem}\label{thm:main}
Let $Q$ be a quiver of Dynkin type $\mathbb{A}_n$ and recall Notation \ref{not:embed}.
A stability function $Z$ is in $\mathcal{TS}(Q)$ if and only if
 the $n-1$ inequalities below hold good:
\begin{equation}\label{eq:inequality1}
\mu_Z([X_0]) > \mu_Z([X_1]) > \cdots > \mu_Z([X_{x(n)}]),
\end{equation} 
\begin{equation}\label{eq:inequality2}
\mu_Z([Y_0]) < \mu_Z([Y_1]) < \cdots < \mu_Z([Y_{y(n)}]).
\end{equation} 
Furthermore, the inequalities above are a minimal set of inequalities defining $\mathcal{TS}(Q)$ as an open set of $\RR^{Q_0} \times (\RR_{>0})^{Q_0}$.
\end{theorem}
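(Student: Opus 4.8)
\emph{Overview of the strategy.} First translate ``$Z\in\mathcal{TS}(Q)$'' into a concrete finite family of slope inequalities; then read off necessity of the $n-1$ inequalities; then prove sufficiency by induction on $n$; finally deduce minimality by a linear‑independence computation. Throughout I use that for a Dynkin $\mathbb{A}$ quiver every indecomposable is an interval module $[\{a,\dots,b\}]$, that subrepresentations and quotients of interval modules are direct sums of interval modules, and that $\mu_Z$ has the mediant (``see‑saw'') property: $\mu_Z(u+v)$ lies between $\mu_Z(u)$ and $\mu_Z(v)$ whenever $r\cdot u,\,r\cdot v>0$, strictly unless the two slopes agree.

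A subrepresentation of $[\{a,\dots,b\}]$ is the one supported on an up‑set for the fence order on $\{a,\dots,b\}$ induced by the arrows, and its connected components are intervals $[\{c,\dots,d\}]$ with $c=a$ or an arrow $c-1\to c$, and with $d=b$ or an arrow $d+1\to d$. Using the mediant property one reduces $Z$‑stability of $[\{a,\dots,b\}]$ to inequalities on its connected subrepresentations, and then (splitting off ``middle'' pieces) to the \emph{cut inequalities}: for all $1\le a<c\le b\le n$, writing $\{a,\dots,b\}$ as the disjoint union of $\{a,\dots,c-1\}$ and $\{c,\dots,b\}$, the piece containing $c-1$ has strictly larger $\mu_Z$ than the piece containing $c$ when the edge $\{c-1,c\}$ carries an arrow $c-1\to c$, and strictly smaller $\mu_Z$ when it carries $c\to c-1$. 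Two remarks then do most of the work. \emph{Necessity:} for each $k$, the module $[X_k\cup X_{k+1}]$ has $[X_{k+1}]$ as a connected subrepresentation with quotient $[X_k]$ (the edge joining the two runs points forward), so its $Z$‑stability forces $\mu_Z([X_{k+1}])<\mu_Z([X_k\cup X_{k+1}])$, equivalently, by the mediant property, the $k$‑th inequality of \eqref{eq:inequality1}; the inequalities \eqref{eq:inequality2} come the same way from $[Y_k\cup Y_{k+1}]$. \emph{Partial sufficiency:} whenever both pieces of a cut are unions of \emph{full} runs $X_k$ (resp.\ $Y_k$), the cut inequality is immediate from \eqref{eq:inequality1}–\eqref{eq:inequality2}, since a weighted mediant of a monotone list of reals lies on the required side.

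For sufficiency in general, assuming \eqref{eq:inequality1}–\eqref{eq:inequality2}, I would induct on $n$ by deleting the vertex $n$ and its incident edge to get a type $\mathbb{A}_{n-1}$ quiver $Q'$. Every indecomposable of $Q$ is either supported in $\{1,\dots,n-1\}$, hence an indecomposable of $Q'$ governed by the inductive hypothesis, or an interval module $[\{a,\dots,n\}]$. The inductive step has two parts: (a) the inequalities \eqref{eq:inequality1}–\eqref{eq:inequality2} for $Q$ imply the corresponding ones for $Q'$ (the run structures of $Q$ and $Q'$ differ only in that one run absorbs $n$ or a length‑one run is created, so exactly one inequality changes, and the change is forced by a mediant argument using the whole list); (b) granted that all indecomposables of $Q'$ are $Z$‑stable, the modules $[\{a,\dots,n\}]$ are $Z$‑stable, proved by analysing their up‑set subrepresentations according to whether they contain $n$ and repeatedly applying the mediant property together with the cut inequalities already available for $Q'$. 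Part (b), with the bookkeeping of (a), is the main obstacle: a purely local recursion (reducing a cut inequality to cut inequalities of shorter intervals) is circular — e.g.\ the comparison of two adjacent simple slopes can reduce back to itself — so the argument must invoke the entire chains \eqref{eq:inequality1} and \eqref{eq:inequality2} at once, which is precisely what peeling one vertex at a time while carrying the full inequality list achieves.

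For minimality, fix $r=\mathbf{1}$. Then each inequality in \eqref{eq:inequality1}–\eqref{eq:inequality2} becomes a strict \emph{linear} inequality $\ell(w)>0$ in $w\in\RR^{Q_0}$, and every such $\ell$ vanishes on $\RR\cdot\mathbf{1}$, so the $n-1$ forms descend to the $(n-1)$‑dimensional space $V:=\RR^{Q_0}/\RR\mathbf{1}$; I claim they are linearly independent there. If $\sum_k a_k\ell^X_k+\sum_l b_l\ell^Y_l$ vanishes on $V$, i.e.\ is a scalar multiple of $w\mapsto\sum_v w_v$, then for the vertex $v$ with $x(v)=m,\ y(v)=m'$ the coefficient of $w_v$ equals $\frac{a_m-a_{m-1}}{|X_m|}+\frac{b_{m'}-b_{m'-1}}{|Y_{m'}|}$ with the conventions $a_{-1}=a_{x(n)}=0$ and $b_{-1}=b_{y(n)}=0$, and all these coefficients must coincide; since the pairs $(m,m')$ that occur trace a monotone lattice path from $(0,0)$ to $(x(n),y(n))$ meeting every value of $m$ and of $m'$, this forces $\frac{a_m-a_{m-1}}{|X_m|}$ to be one constant $\alpha$ and $\frac{b_{m'}-b_{m'-1}}{|Y_{m'}|}$ one constant $\beta$, and telescoping against the boundary conventions gives $\alpha\cdot n=\beta\cdot n=0$, whence all $a_k=b_l=0$. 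Therefore the cone cut out in $V$ by \eqref{eq:inequality1}–\eqref{eq:inequality2} is a full‑dimensional simplicial cone with exactly $n-1$ facets (incidentally re‑proving $\mathcal{TS}(Q)\neq\varnothing$), so no inequality is redundant: for any one of them, a point of $V$ lying across that facet but inside all the others, paired with $r=\mathbf{1}$, is a stability condition satisfying every inequality except the chosen one.
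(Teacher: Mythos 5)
The central gap is that the sufficiency direction --- the substantive half of the theorem --- is only described, not proved. You correctly set up the same induction the paper uses (delete the sink $n$, restrict $Z$ to the subquiver $Q'$ on $\{1,\dots,n-1\}$, then deal with the interval modules $[\{a,\dots,n\}]$), and you correctly diagnose that exactly one inequality changes when passing to $Q'$ (the last one of the $Y$-chain, since $Y_{y(n)}$ loses the vertex $n$) and that the modules supported at $n$ are the remaining obstruction. But you then label part (b) ``the main obstacle'' and stop. Concretely, two arguments are missing. First, you must actually derive $\mu_Z([Y_{y(n)-1}])<\mu_Z([Y_{y(n)}\setminus\{n\}])$ from \eqref{eq:inequality1}--\eqref{eq:inequality2}; this is not a one-line mediant observation --- the paper extracts $\mu_Z([\{n\}])<\mu_Z([\tilde{Y}_{y(n)-1}])$ from the concatenated chain and then pushes it through two short exact sequences. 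Second, for $0<W<V$ with $n\in\supp V$ you need a genuine case analysis: when $n\in\supp W$, compare $[\{n\}]$ and $W/[\{n\}]$ separately against $V/W$, using the seesaw property together with the inductive hypothesis applied to the chain $[X_{x(n)-1}]\le W/[\{n\}]<V/[\{n\}]$; when $n\notin\supp W$, identify the unique maximal subrepresentation $W_0=[\supp V\setminus Y_{y(n)}]$ of $V$ not supported at $n$, show $\mu_Z(V)<\mu_Z([Y_{y(n)}])$ from the chain, and conclude $\mu_Z(W)\le\mu_Z(W_0)<\mu_Z(V)$. Your own remark that the naive local recursion on cut inequalities is circular is exactly why these global steps cannot be waved through; as written, the proof of the ``if'' direction is an outline of the right strategy rather than a proof.

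The other two pieces are sound. Necessity via stability of $[X_k\cup X_{k+1}]$ and $[Y_k\cup Y_{k+1}]$ is a harmless variant of the paper's use of the nested chains $[\tilde{X}_k]\subset[\tilde{X}_{k-1}]$. Your minimality argument is genuinely different from the paper's and is, to my mind, preferable: the paper deduces minimality from nonemptiness of every fiber $\mathcal{TS}_r(Q)$ (proved separately via Farkas' lemma) combined with the fact that the lineality space of the cone is exactly $\RR r$, whereas you check directly at $r=\mathbf{1}$ that the $n-1$ linear forms are linearly independent, so that the cone is a full-dimensional simplicial cone each of whose $n-1$ facets witnesses the indispensability of one inequality; this is self-contained and re-proves nonemptiness of the standard fiber as a byproduct. (Minor points: the $Y$-contribution to the coefficient of $w_v$ should be $\frac{b_{m'-1}-b_{m'}}{|Y_{m'}|}$, a sign that does not affect the conclusion, and ``vanishes on $V$'' should read ``descends to the zero form on $V$,'' which for forms already annihilating $\RR\mathbf{1}$ just means the combination is zero.)
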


The proof of this theorem is in Section \ref{sec:proof}.
One notices, however, that Theorem \ref{thm:main} does not tell us about the solution space to this set of inequalities.
With a little more work, we obtain the following corollary,
whose proof is in Section \ref{sec:proof} as well.

\begin{corollary}\label{cor:nonempty}
Let $Q$ be a quiver of Dynkin type $\mathbb{A}$.
Viewing $\mathcal{TS}(Q) \subset \RR^{Q_0} \times \RR_{>0}^{Q_0}$ via \eqref{eq:Zform}, the projection
$\mathcal{TS}(Q) \to \RR_{>0}^{Q_0}$ sending $(w,r) \mapsto r$
is surjective with each fiber linearly equivalent to $\RR \times \RR_{>0}^{Q_1}$.
In particular, for any $r \in \RR_{>0}^{Q_0}$ there is a total stability function for $Q$ of the form \eqref{eq:Zform}.
\end{corollary}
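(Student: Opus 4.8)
The plan is to analyze the system \eqref{eq:inequality1}--\eqref{eq:inequality2} with $r \in \RR_{>0}^{Q_0}$ held fixed, and show that, as a system of affine-linear inequalities in the variable $w \in \RR^{Q_0}$, its solution set is (linearly isomorphic to) $\RR \times \RR_{>0}^{Q_1}$. The key observation is that each inequality $\mu_Z([X_{k-1}]) > \mu_Z([X_k])$ is, after clearing the positive denominators $r\cdot[X_{k-1}]$ and $r\cdot[X_k]$, an honest linear inequality in $w$; and these linear inequalities have a particularly clean combinatorial shape. First I would rewrite $\mu_Z([X_{k-1}]) - \mu_Z([X_k]) > 0$ as $\ell_k(w) > 0$ where $\ell_k$ is a linear functional on $\RR^{Q_0}$ depending on $r$; and similarly define $m_k(w) < 0$ (or $> 0$ after a sign flip) from \eqref{eq:inequality2}. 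Since $X_{k-1} = X_k \sqcup (\text{vertices at $x$-coordinate } k-1)$ and the $\tilde X_i$ form a chain, these functionals are ``triangular'' with respect to a natural ordering of the vertices, and I would make this precise.

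The cleaner route, which I would pursue, is to produce an explicit linear change of coordinates on $\RR^{Q_0}$ that turns the $n-1$ functionals into $n-1$ of the standard coordinates. Concretely: the quiver $Q$ has $n$ vertices and $Q_1$ has $n-1$ arrows, and $|Q_0| = 1 + |Q_1|$. Pick the vertex $1$ (the root of the embedding in Notation \ref{not:embed}) as a distinguished vertex, and index the remaining $n-1$ vertices by the arrows of $Q$ in the obvious way: vertex $i+1$ corresponds to the unique arrow between $i$ and $i+1$. Under Notation \ref{not:embed}, each arrow either increments $x$ or increments $y$, so the arrows of $Q$ are partitioned into the ``horizontal'' arrows (one for each step $X_{k-1} \supsetneq X_k$ in the $\tilde X$-chain, contributing one constraint of type \eqref{eq:inequality1}) and the ``vertical'' arrows (one for each step in the $\tilde Y$-chain, contributing one constraint of type \eqref{eq:inequality2}). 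This gives a bijection between the arrows of $Q$ and the $n-1$ inequalities. I would then check that the map $w \mapsto (w(1),\, \ell_{a}(w))_{a \in Q_1}$ — value at the distinguished vertex together with the $n-1$ signed slope-difference functionals — is a linear automorphism of $\RR^{Q_0}$: the triangularity coming from the nested chains \eqref{eq:xychains} makes the corresponding matrix unitriangular up to the positive scalars $r\cdot[X_k]$, hence invertible. In these new coordinates, the total-stability locus over the fixed $r$ is exactly $\{w(1) \in \RR\} \times \{\ell_a(w) > 0 : a \in Q_1\} = \RR \times \RR_{>0}^{Q_1}$, and since the coordinate change is linear (and depends continuously/linearly on the fixed $r$, which only matters for the ``linearly equivalent'' bookkeeping), this establishes the fiber description. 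Surjectivity of the projection, and the final sentence, are then immediate since the fiber over every $r$ is nonempty.

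The step I expect to be the main obstacle is verifying that the chosen $n-1$ functionals $\ell_a$, $m_a$ together with evaluation at vertex $1$ really are linearly independent for every $r \in \RR_{>0}^{Q_0}$ — i.e.\ that the relevant matrix is invertible regardless of $r$. The nested structure in \eqref{eq:xychains} strongly suggests a triangular form: the functional associated to the innermost step $X_{x(n)-1} \supsetneq X_{x(n)}$ involves only the coordinates at $x$-coordinate $\geq x(n)-1$, the next one brings in one more ``layer,'' and so on, and symmetrically for the $Y$-chain; the two chains interact only through their common refinement, which is where I would need to be careful that no unwanted cancellation occurs. I would handle this either by ordering the vertices compatibly with both chains (a linear extension of the two-dimensional grid order from Notation \ref{not:embed}) so that the matrix is genuinely block-triangular with nonzero diagonal entries $\pm r(\cdot)$, or — if that ordering is awkward to pin down — by an induction on $n$ peeling off a leaf vertex of $Q$, which removes exactly one of the $X$- or $Y$-layers and one inequality at a time. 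Everything else (clearing denominators, identifying $\RR_{>0}^{Q_1}$ with the positive orthant in the new coordinates, deducing surjectivity) is routine.
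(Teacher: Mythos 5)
Your plan is essentially correct, but it takes a different route from the paper. The paper's proof (due to Hugh Thomas) argues by contradiction via Farkas' lemma: if the fiber $\mathcal{TS}_r(Q)$ were empty, there would be a nontrivial nonnegative linear combination $\sum a_i f_i + \sum b_j g_j = 0$ of the slope-difference functionals, and a march through the coefficients of $w_1, w_2, \dotsc$ forces $a_i = \tilde{x}_i$, $b_j = \tilde{y}_j$ up to scalar until a sign contradiction appears at a vertex where $x$ or $y$ is maximal; the fiber description then follows from Theorem \ref{thm:main} together with Lemma \ref{lem:lineality}. You instead propose to exhibit the map $w \mapsto (w_1, (\ell_a(w))_{a \in Q_1})$ directly as a linear automorphism of $\RR^{Q_0}$, which, if carried out, yields nonemptiness and the fiber description in one stroke and is arguably cleaner. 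Your automorphism claim is true: a functional lying in the span of both the $X$-differences and the $Y$-differences has the form $\sum c_i\, w(X_i)/x_i = \sum d_j\, w(Y_j)/y_j$ with $\sum c_i = \sum d_j = 0$, and connectivity of $Q$ forces all ratios $c_i/x_i$ and $d_j/y_j$ to equal a common $\lambda$, whence $0 = \sum c_i = \lambda\, r(Q_0)$ gives $\lambda = 0$; so the $n-1$ functionals are independent with common kernel $\RR r$, on which evaluation at vertex $1$ is nonzero. The one soft spot is your primary mechanism for this step: the matrix is \emph{not} literally (uni)triangular in any natural vertex order, because each $f_i$ involves two full columns and each $g_j$ two full rows, and these interleave (already in Example \ref{ex:1} no ordering makes all of $e_1^*, f_0, f_1, g_0, \dotsc$ simultaneously triangular). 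So you would need either the kernel computation above, your fallback induction peeling off a leaf, or the paper's careful coefficient-by-coefficient elimination — which is, in substance, the same computation the paper performs inside its Farkas argument.
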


A final remark: perhaps unsurprisingly, we expect Theorem \ref{thm:main} and Corollary \ref{cor:nonempty} to generalize only partially to other Dynkin types.

\begin{remark}\label{rem:counterexamples}
Work in progress with Yariana Diaz and Cody Gilbert generalizes Theorem \ref{thm:main} to arbitrary Dynkin type using Auslander-Reiten sequences, but without the minimality statement.
We have also found that the cone of \emph{standard} linear stability conditions, that is, the fiber over $r=(1,1,\dotsc, 1)$ in the language of Corollary \ref{cor:nonempty}, is empty
for certain orientations in Dynkin types $\mathbb{D}_n$ for all $n \geq 9$ and types $\mathbb{E}_7, \mathbb{E}_8$
(cf. \cite[Conjecture~7.1]{ReinekeHNsystem}).
\end{remark}

\subsection*{Acknowledgements}
The author thanks \O yvind Solberg for discussions about the software QPA \cite{QPA}, which was very helpful for completing this paper.  The author also thanks Yariana Diaz and Cody Gilbert for discussions on stability of representations of Dynkin quivers and for working together on the QPA and SageMath code which helped finish this work.
Special thanks go to Hugh Thomas for the proof of Corollary \ref{cor:nonempty}, and an anonymous commentor for pointing out that the results in the first version of this article used outdated language.
This work was supported by a grant from the Simons Foundation (636534, RK).


\section{Background}\label{sec:background}
In this section we establish our notation and make some initial reductions for the proof of the main theorem.
More detailed background can be found in textbooks such as \cite{Schiffler:2014aa,DWbook} and the survey \cite{Reinekesurvey}.

\subsection{Quiver representations}
We write $Q_0$ for the set of vertices of a quiver $Q$, and $Q_1$ for its set of arrows, while $t\alpha$ and $h\alpha$ denote the tail and head of an arrow $t\alpha \xrightarrow{\alpha} h\alpha$. A \emph{representation} $V$ of $Q$ assigns a finite-dimensional vector space $V(z)$ to each $z\in Q_0$, and to each $\alpha \in Q_1$ a choice of linear map $V(\alpha) \colon V(t\alpha) \to V(h\alpha)$.  
A \emph{subrepresentation} $W \subseteq V$ is a collection of subspaces $(W(z) \subseteq V(z))_{z \in Q_0}$ such that $V(\alpha)(W(t\alpha)) \subseteq W(h\alpha)$ for all $\alpha \in Q_1$.
The \emph{support} of a representation $V$, written $\supp V$, is the set of vertices $z \in Q_0$ such that $V(z) \neq 0$.
Definitions of standard notions such as \emph{morphisms}, \emph{direct sum}, and \emph{indecomposability} can be found in the references above.

\begin{notation}\label{not:indecomp}
For a subset $S \subseteq Q_0$, we let $[S]$ be the representation $Q$ such that
\begin{equation}
[S](z) = 
\begin{cases}
\kk & z \in S\\
0 & z \not \in S
\end{cases}
\quad \text{and} \quad
[S](\alpha) = 
\begin{cases}
id_\kk & t\alpha,\, s\alpha \in S\\
0 & \text{otherwise}.
\end{cases}
\end{equation}
\end{notation}

For a quiver of type $\mathbb{A}_n$, it can be seen from repeated use of Gaussian elimination that 
as $S$ varies over all intervals $S=\{i, \dotsc, j\}$ for $1 \leq i \leq j \leq n$,
the representations $[S]$ trace out all isomorphism classes of indecomposables for type $\mathbb{A}$ quivers (a special case of Gabriel's theorem \cite{gabriel}).

Assume $Q$ is acyclic.  We write $\rep(Q)$ for the category of representations of $Q$, and often identify the \emph{Grothendieck group} $K_0(Q):=K_0(\rep(Q))$ with $\ZZ^{Q_0}$ by identifying the class of the simple representation $[\{z\}]$, for $z \in Q_0$, with the standard basis vector which is 1 in coordinate $z$ and 0 elsewhere.


\subsection{Stability}
A \emph{weight} on a quiver $Q$ is an element $w \in \RR^{Q_0}$, where we write $w_z\in \RR$ for the value in coordinate $z \in Q_0$, and we write $|w| :=\sum_{z \in Q_0}w_z$.
A \emph{dimension vector} for $Q$ is a weight such that each $w_z$ is a nonnegative integer.  The dimension vector of a representation $V$ of $Q$ is  $(\dim V(z))_{z \in Q_0}$.
Given a weight $w$ and representation $V$ of $Q$, we write $w(V):=w \cdot \ddim V$, where $\cdot$ is the standard dot product on $\RR^{Q_0}$.

\begin{remark}
Another notion of stability which is prevalent in quiver literature is the following \cite{King94}.
A representation $V$ of $Q$ is \emph{$\theta$-stable} for $\theta \in \RR^{Q_0}$ if $\theta(V)=0$ and $\theta(W) <0$ for all proper, nonzero subrepresentations $W < V$.  
It can be directly seen that a representation which is $\theta$-stable is $Z$-stable as well for any $Z$ of the form \eqref{eq:Zform} with $w=\theta$, but the converse does not hold.  However, we can take an arbitrary $Z$ as in \eqref{eq:Zform} and define the weight
\begin{equation}
\theta:=r(V)\,w - w(V)r,
\end{equation}
and we have that $V$ is $Z$-stable if and only if $V$ is $\theta$-stable.

The $\theta$-stable representations for fixed $\theta$ are the simple objects of the full, abelian subcategory of $\theta$-semistable representations inside the category of all finite-dimensional representations of $Q$.  Thus we can never have all indecomposable representations $\theta$-stable in the above sense if $Q$ has a nonempty arrow set (by Schur's lemma).
 \end{remark}

\subsection{Initial reductions}
The results of this section are valid for all quivers, not just type $\mathbb{A}$. Presumably these lemmas have been observed elsewhere, but we include proofs of everything for completeness.
The following lemma can be easily checked (e.g. the proof of \cite[Lemma~4.1]{Reinekesurvey} generalizes).

\begin{lemma}\label{lem:seesaw}
Let $Z$ be a linear stability function on $Q$ and $0 \to A \to B \to C \to 0$ a short exact sequence in $\rep(Q)$.  Then we have the ``seesaw property'':
\begin{equation}
\mu_Z(A) < \mu_Z(B)\quad \Leftrightarrow \quad \mu_Z(B) < \mu_Z(C)\quad \Leftrightarrow \quad \mu_Z(A) < \mu_Z(C),
\end{equation}
and the same is true when $<$ is replaced by $\leq$.  

Furthermore, we have for any $c \in \RR$ that
\begin{equation}\label{eq:Zmin}
\mu_Z(A),\ \mu_Z(C) < c \quad \Rightarrow \quad \mu_Z(B) < c.
\end{equation}
\end{lemma}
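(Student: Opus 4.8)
The plan is to reduce every assertion to the elementary ``mediant inequality'' for fractions with positive denominators. Write $Z$ in the form \eqref{eq:Zform}, so that for a nonzero representation $V$ we have $Z(\ddim V) = w(V) + r(V)\,i$ with $r(V) = r\cdot\ddim V > 0$, and hence $\mu_Z(V) = w(V)/r(V)$ is defined as in \eqref{eq:mudef}. We may assume $A$ and $C$, hence also $B$, are all nonzero, as otherwise the short exact sequence is split with two of the terms isomorphic and there is nothing to prove. Since the short exact sequence gives $\ddim B = \ddim A + \ddim C$ in $K_0(Q) \simeq \ZZ^{Q_0}$, and $w, r$ are linear, we get $w(B) = w(A) + w(C)$ and $r(B) = r(A) + r(C)$, the latter a sum of two positive reals.

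First I would verify the chain of equivalences by clearing denominators, all of which are positive. The inequality $\mu_Z(A) < \mu_Z(B)$ reads $\tfrac{w(A)}{r(A)} < \tfrac{w(A)+w(C)}{r(A)+r(C)}$; multiplying through by $r(A) > 0$ and then by $r(A)+r(C) > 0$ and cancelling the common term $w(A)r(A)$, this is equivalent to $w(A)\,r(C) < r(A)\,w(C)$, which upon dividing by $r(A)r(C) > 0$ is precisely $\mu_Z(A) < \mu_Z(C)$. In exactly the same way, $\mu_Z(B) < \mu_Z(C)$, i.e. $\tfrac{w(A)+w(C)}{r(A)+r(C)} < \tfrac{w(C)}{r(C)}$, clears to $r(C)\,w(A) < w(C)\,r(A)$, the same condition. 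This gives all three equivalences at once. Since in this computation we only ever multiply an inequality by a strictly positive quantity, replacing ``$<$'' by ``$\leq$'' throughout is valid verbatim, yielding the non-strict version.

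For \eqref{eq:Zmin}, suppose $\mu_Z(A) < c$ and $\mu_Z(C) < c$. Using $r(A), r(C) > 0$, these say $w(A) < c\,r(A)$ and $w(C) < c\,r(C)$; adding and invoking linearity gives $w(B) < c\,r(B)$, and dividing by $r(B) > 0$ yields $\mu_Z(B) < c$. There is essentially no obstacle in this lemma: the single point that must be invoked is the defining positivity property of a central charge, namely $r(V) > 0$ for every nonzero $V$, which is exactly what licenses all of the cross-multiplications above to preserve the direction of the inequalities. (Geometrically, $Z(\ddim B)$ lies on the open segment between $Z(\ddim A)$ and $Z(\ddim C)$ in the upper half plane, and $V \mapsto \mu_Z(V)$ is monotonic along such a segment; but the algebraic argument above is the most economical to record.) $\qed$
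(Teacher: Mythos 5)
Your proof is correct and is exactly the routine verification the paper leaves to the reader (it cites \cite[Lemma~4.1]{Reinekesurvey} and says the lemma ``can be easily checked'' rather than writing it out): additivity of $w$ and $r$ across the short exact sequence plus cross-multiplication by the positive denominators $r(A), r(C), r(B)$ is the standard mediant-inequality argument. No gaps; the only implicit point, which you address, is that $A$ and $C$ must be nonzero for their slopes to be defined.
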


The next lemma allows us to only consider \emph{indecomposable} subrepresentations to determine if a representation is stable.

\begin{lemma}\label{lem:indecomp}
Let $Q$ be an arbitrary quiver.  A representation $V$ of $Q$ is $Z$-stable if and only if $\mu_Z(W) < \mu_Z(V)$ for all proper nonzero indecomposable subrepresentations $W < V$.
\end{lemma}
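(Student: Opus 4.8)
The forward implication is immediate: indecomposable subrepresentations are in particular subrepresentations, so $Z$-stability of $V$ forces $\mu_Z(W) < \mu_Z(V)$ for every proper nonzero indecomposable $W < V$. The content is the converse, which I would prove by a Krull--Schmidt reduction followed by the seesaw property.

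So assume $\mu_Z(W) < \mu_Z(V)$ holds for all proper nonzero indecomposable subrepresentations, and let $0 < W < V$ be an arbitrary proper nonzero subrepresentation. Since $\rep(Q)$ is Krull--Schmidt, write $W \cong W_1 \oplus \dots \oplus W_m$ with each $W_i$ indecomposable. Each composite $W_i \into W \into V$ realizes $W_i$ as a subrepresentation of $V$; it is nonzero, being an indecomposable summand, and proper, being contained in $W \subsetneq V$. Hence the hypothesis yields $\mu_Z(W_i) < \mu_Z(V)$ for every $i$.

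It then remains to pass from the summands back to $W$. I would do this by induction on $m$: the short exact sequences $0 \to W_m \to W_1 \oplus \dots \oplus W_m \to W_1 \oplus \dots \oplus W_{m-1} \to 0$, together with \eqref{eq:Zmin} of Lemma \ref{lem:seesaw} applied with $c = \mu_Z(V)$, give $\mu_Z(W) < \mu_Z(V)$ once we know $\mu_Z(W_m) < \mu_Z(V)$ and, inductively, $\mu_Z(W_1 \oplus \dots \oplus W_{m-1}) < \mu_Z(V)$. Equivalently and more directly, $\mu_Z(W) = \frac{w(W)}{r(W)} = \frac{\sum_i w(W_i)}{\sum_i r(W_i)}$ is the mediant of the fractions $\mu_Z(W_i) = \frac{w(W_i)}{r(W_i)}$, whose denominators $r(W_i) = r\cdot \ddim W_i$ are strictly positive because $r \in (\RR_{>0})^{Q_0}$ and $W_i \neq 0$; a mediant of fractions with positive denominators, each less than a constant, is again less than that constant.

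I do not anticipate a genuine obstacle here. The only points needing care are the verification that each indecomposable summand $W_i$ is a bona fide proper nonzero subrepresentation of $V$, so that the hypothesis applies to it, and keeping track of positivity of the slope denominators, which is exactly what makes the mediant (equivalently, seesaw) step valid.
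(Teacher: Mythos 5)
Your proof is correct and follows essentially the same route as the paper: both arguments decompose an arbitrary proper nonzero subrepresentation into its indecomposable summands via Krull--Schmidt, apply the hypothesis to each summand, and recombine using the seesaw/mediant property of slopes. The paper phrases the last step by selecting a summand of maximal slope and noting $\mu_Z(Y)\leq\mu_Z(W)$, whereas you iterate \eqref{eq:Zmin} (equivalently invoke the mediant inequality directly), but these are the same underlying observation.
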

\begin{proof}
The forward implication follows from the definition.  For the converse, assume $\mu_Z(W) < \mu_Z(V)$ for all proper indecomposable subrepresentations $W < V$, and let $Y < V$ be an arbitrary proper nonzero subrepresentation.  
If every indecomposable summand of $Y$ were to have slope strictly less than $Y$, this would contradict Lemma \ref{lem:seesaw}.
Therefore, taking $W \leq Y$ to be an indecomposable summand of $Y$ of maximal slope, we have  $\mu_Z(Y) \leq \mu_Z(W)$, and the result follows.
\end{proof}

Recall that from a quiver $Q$ we obtain its \emph{opposite quiver} $Q^{\rm op}$ by reversing the orientations of all arrows of $Q$.  Note that a central charge for $Q$ is also one for $Q^{\rm op}$ via the isomorphism $K_0(Q) \simeq K_0(Q^{\rm op})$ obtained by identifying the simple representations of $Q$ and $Q^{\rm op}$ associated to the same vertex.
Taking the vector space dual at each vertex, and dual map over each arrow, gives a duality between the categories of representations of $Q$ and $Q^{\rm op}$.
The following lemma gives a helpful connection between stability in these categories.

\begin{lemma}\label{lem:dual}
Let $Q$ be an arbitrary quiver and $Z$ a central charge for $Q$ and $Q^{\rm op}$.  Let $W < V$ be a proper nonzero subrepresentation and $(V/W)^* < V^*$ the corresponding dual subrepresentation of $Q^{\rm op}$.  Then $\mu_Z(W) < \mu_Z(V)$ if and only if $\mu_Z( (V/W)^*) < \mu_Z( V^*)$.
\end{lemma}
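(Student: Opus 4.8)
The plan is to reduce the claim to the seesaw property of Lemma~\ref{lem:seesaw} together with two elementary features of the duality $(-)^*\colon\rep(Q)\to\rep(Q^{\rm op})$: it is contravariant and exact, and it preserves the dimension vector at every vertex, hence preserves $K_0$-classes and therefore $\mu_Z$-values (the last point using that $Z$ is one and the same central charge for $Q$ and $Q^{\rm op}$ under the identification of simple objects).

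Concretely, I would first apply $(-)^*$ to the short exact sequence $0\to W\to V\to V/W\to 0$ in $\rep(Q)$, obtaining a short exact sequence $0\to (V/W)^*\to V^*\to W^*\to 0$ in $\rep(Q^{\rm op})$; this is precisely what identifies $(V/W)^*$ with the named subrepresentation of $V^*$, with quotient $W^*$. Since $\dim V^*(z)=\dim V(z)$ for every $z$, and likewise for $W$ and $V/W$, we get $\ddim V^*=\ddim V$, $\ddim W^*=\ddim W$, and $\ddim(V/W)^*=\ddim(V/W)$, so $\mu_Z(V^*)=\mu_Z(V)$, $\mu_Z(W^*)=\mu_Z(W)$, and $\mu_Z((V/W)^*)=\mu_Z(V/W)$. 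Now I would run Lemma~\ref{lem:seesaw} on both sequences: applied to $0\to W\to V\to V/W\to 0$ it expresses $\mu_Z(W)<\mu_Z(V)$ as a comparison between $\mu_Z(V)$ and $\mu_Z(V/W)$, and applied to $0\to (V/W)^*\to V^*\to W^*\to 0$ it expresses $\mu_Z((V/W)^*)<\mu_Z(V^*)$ as a comparison between $\mu_Z(V^*)$ and $\mu_Z(W^*)$; substituting the dimension-vector identities turns the second into a statement about $\mu_Z(W),\mu_Z(V),\mu_Z(V/W)$, and reconciling it with the first gives the equivalence. A cleaner packaging would discard representations after the first step and argue entirely with the three complex numbers $Z(W),Z(V),Z(V/W)$ in the open upper half-plane, using only $Z(V)=Z(W)+Z(V/W)$ together with the fact that passing to $Q^{\rm op}$ returns these same three numbers but interchanges the roles of ``sub'' and ``quotient''; the claim then becomes a single observation about the cotangent of the argument of a sum of two such vectors.

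The one point requiring care — and what I expect to be the only real obstacle — is the bookkeeping of which term of each short exact sequence is the subobject and which is the quotient, since the duality swaps these and hence flips the direction in which each seesaw runs; pinning this down is exactly what fixes the direction of the inequality appearing in the conclusion.
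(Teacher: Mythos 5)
Your strategy --- dualize the short exact sequence, note that $(-)^*$ preserves dimension vectors and hence slopes, then run the seesaw property on both sequences --- is exactly the route the paper takes. But the step you defer to the very end, ``reconciling it with the first gives the equivalence,'' is precisely where the argument fails, and no amount of reconciliation will produce the equivalence as stated. Carrying out the two seesaws explicitly: from $0\to W\to V\to V/W\to 0$ you get $\mu_Z(W)<\mu_Z(V)\Leftrightarrow \mu_Z(V)<\mu_Z(V/W)$, while from $0\to (V/W)^*\to V^*\to W^*\to 0$ together with $\mu_Z(X^*)=\mu_Z(X)$ you get $\mu_Z((V/W)^*)<\mu_Z(V^*)\Leftrightarrow \mu_Z(V^*)<\mu_Z(W^*)\Leftrightarrow \mu_Z(V)<\mu_Z(W)$. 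These two conditions are mutually exclusive (away from equality of slopes), not equivalent. A concrete check: for $Q\colon 1\to 2$, $V=[\{1,2\}]$, $W=[\{2\}]$, $r=(1,1)$, $w=(1,0)$, one has $\mu_Z(W)=0<\tfrac{1}{2}=\mu_Z(V)$ while $\mu_Z((V/W)^*)=1>\tfrac{1}{2}=\mu_Z(V^*)$.

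What your calculation actually establishes is the reversed statement $\mu_Z(W)<\mu_Z(V)\Leftrightarrow \mu_Z(V^*)<\mu_Z((V/W)^*)$: duality interchanges subobjects and quotients, and the seesaw then forces every slope inequality to flip direction. This reversed form is also the one the main proof genuinely uses --- note the phrase ``by reversing the directions of all inequalities'' there, and that the two chains \eqref{eq:inequality1} and \eqref{eq:inequality2} run in opposite directions and are interchanged by passing to $Q^{\rm op}$. You correctly flagged the sub-versus-quotient bookkeeping as the one delicate point, but then asserted the wrong outcome of that bookkeeping. A correct write-up must either reverse the inequality in the conclusion or pair the dualization with the sign change $w\mapsto -w$ in the central charge; as it stands, the final sentence of your proposal claims something your own intermediate steps refute.
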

\begin{proof}
Noting that $\mu_Z(X)=\mu_Z(X^*)$ for any representation $X$ of $Q$, this lemma follows from Lemma \ref{lem:seesaw}.
\end{proof}


The following lemma is used to prove the minimality part of the main theorem.
For $r \in \RR_{>0}^{Q_0}$, we denote by $\mathcal{TS}_r(Q)$ the fiber of the projection described in Corollary \ref{cor:nonempty}:
\begin{equation}\label{eq:TSrQ}
\mathcal{TS}_r(Q)=\setst{w \in \RR^{Q_0}}{Z(x)= w\cdot x + (r\cdot x) i \in \mathcal{TS}(Q)}.
\end{equation}

\begin{lemma}\label{lem:lineality}
Let $Q$ be a connected quiver and $r \in \RR_{>0}^{Q_0}$ such that $\mathcal{TS}_r(Q)$ is nonempty.  
Then the only subspace of $\RR^{Q_0}$ which has a translate contained in $\mathcal{TS}_r(Q)$ is $\RR r$.
\end{lemma}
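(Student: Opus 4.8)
The plan is to exploit that, with $r$ held fixed, every inequality cutting out $\mathcal{TS}_r(Q)$ becomes \emph{linear} in $w$, so that a whole line of solutions forces a degeneracy. Writing $Z(x)=w\cdot x+(r\cdot x)i$ and clearing the positive denominators $r(W),r(V)$, the condition $\mu_Z(W)<\mu_Z(V)$ reads $\ell_{W,V}(w):=r(V)\,w(W)-r(W)\,w(V)<0$, and $w\mapsto\ell_{W,V}(w)$ is linear. By the definition of total stability (and Lemma \ref{lem:indecomp}), a weight $w$ lies in $\mathcal{TS}_r(Q)$ if and only if $\ell_{W,V}(w)<0$ for every indecomposable $V$ and every nonzero proper subrepresentation $W<V$.

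First I would dispose of the easy half: $\RR r$ itself admits a translate inside $\mathcal{TS}_r(Q)$. Indeed, if $w_0\in\mathcal{TS}_r(Q)$ (nonempty by hypothesis) and $c\in\RR$, then the stability function attached to $w_0+cr$ has slope function $\mu_Z+c$, which preserves all the strict inequalities, so $w_0+\RR r\subseteq\mathcal{TS}_r(Q)$. It then remains to show this is the largest such subspace, i.e.\ that any linear subspace $U\subseteq\RR^{Q_0}$ with $w_0+U\subseteq\mathcal{TS}_r(Q)$ for some $w_0$ satisfies $U\subseteq\RR r$.

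So fix such a $U$ and let $v\in U$. For each admissible pair $(W,V)$ the map $t\mapsto\ell_{W,V}(w_0+tv)=\ell_{W,V}(w_0)+t\,\ell_{W,V}(v)$ is negative for all $t\in\RR$; an affine function of $t$ that is everywhere negative must be constant, so $\ell_{W,V}(v)=0$. I would now apply this to $V:=[\{t\alpha,h\alpha\}]$ and $W:=[\{h\alpha\}]$ for each arrow $\alpha\in Q_1$, where $V$ is the (indecomposable) length-$2$ interval module on that edge and $W$ is a nonzero proper subrepresentation with quotient $[\{t\alpha\}]$. Since $v(\cdot)$ and $r(\cdot)$ are additive along $0\to W\to V\to V/W\to 0$, the equation $\ell_{W,V}(v)=0$ reduces to $r_{t\alpha}\,v_{h\alpha}=r_{h\alpha}\,v_{t\alpha}$, i.e.\ $v_{t\alpha}/r_{t\alpha}=v_{h\alpha}/r_{h\alpha}$ (using $r\in\RR_{>0}^{Q_0}$). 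As $Q$ is connected, the ratio $v_z/r_z$ is therefore the same for every $z\in Q_0$; calling it $c$, we get $v=cr\in\RR r$. Since $v\in U$ was arbitrary, $U\subseteq\RR r$, as desired.

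I do not foresee a serious obstacle; the one point needing a moment's care is confirming that the pairs $([\{h\alpha\}],[\{t\alpha,h\alpha\}])$ genuinely index defining inequalities of $\mathcal{TS}_r(Q)$ — that $[\{t\alpha,h\alpha\}]$ is indecomposable (using that $Q$ is acyclic, which in any case follows from $\mathcal{TS}_r(Q)\neq\emptyset$) and that its relevant sub- and quotient representations are the simple modules at the two endpoints of the arrow — all of which is immediate from Notation \ref{not:indecomp}. Everything else is the elementary fact that a line contained in a strict half-space forces the defining linear form to vanish, together with the connectedness of $Q$.
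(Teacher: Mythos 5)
Your proof is correct and is essentially the paper's argument: both halves match, including the key step of testing the line of weights against the pair $W=[\{h\alpha\}]\subset V=[\{t\alpha,h\alpha\}]$ for each arrow and using connectedness to force $v_z/r_z$ constant. The only cosmetic differences are that you argue directly (an everywhere-negative affine function of $t$ has vanishing linear part) with cleared denominators, whereas the paper phrases the same computation with slope functions and as a contradiction.
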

\begin{proof}
For $w \in \mathcal{TS}_r(Q)$ and $c \in \RR$, let $\mu_{w+cr}$ be the slope function associated to the central charge $Z(x) = (w + cr)\cdot x + (r\cdot x)i$.
It can be directly checked that $\mu_{w+cr} = \mu_w + c$ as $\RR$-valued functions, and thus $w \in \mathcal{TS}_r(Q)$ 
if and only if $w+cr \in \mathcal{TS}_r(Q)$.  So if $\mathcal{TS}_r(Q)$ is nonempty, a translate of the line $\RR r$ is contained in $\mathcal{TS}_r(Q)$.

Now suppose for contradiction that there exists $w \in \mathcal{TS}_r(Q)$ and $\eta \in \RR^{Q_0}$, where $\RR \eta \neq \RR r$, 
but that the affine linear subspace $w+\RR \eta \subset \mathcal{TS}_r(Q)$.
Since $Q$ is connected, $\RR \eta \neq \RR r$ implies that there exists $\alpha \in Q_1$ such that $\eta_{t\alpha}r_{t\alpha} \neq \eta_{h\alpha}r_{h\alpha}$.
Let $V$ be the indecomposable representation which is dimension 1 on $t\alpha, h\alpha$ and 0 elsewhere, whose only nonzero map is over $\alpha$, and let $W<V$ the simple subrepresentation supported at $\{h\alpha\}$.  Then we compute
\begin{equation}
\mu_{\eta}(V)-\mu_{\eta}(W) = 
\frac{\eta_{t\alpha}+\eta_{h\alpha}}{r_{t\alpha} + r_{h\alpha}} - \frac{\eta_{h\alpha}}{r_{h\alpha}} = 
\frac{r_{h\alpha}\eta_{t\alpha}-r_{t\alpha}\eta_{h\alpha}}{(r_{h\alpha} + r_{h\alpha})r_{h\alpha}} \neq 0 .
\end{equation}
Since we assumed $w+\RR \eta \subset \mathcal{TS}(Q)$, for any $c \in \RR$ we have
\begin{equation}
0 < \mu_{w+ c\eta}(V)-\mu_{w + c\eta}(W) = (\mu_{w}(V) - \mu_{w}(W)) + c(\mu_{\eta}(V)-\mu_{\eta}(W)),
\end{equation}
a contradiction since $c$ is arbitrary and the other values on the right hand side are fixed.
\end{proof}

\section{Proof of the main theorem}\label{sec:proof}
We begin with an elementary lemma that is used repeatedly throughout the proof of the main theorem.

\begin{lemma}\label{lem:chains}
A stability function $Z$ satisfies the sequence of inequalities \eqref{eq:inequality1} if and only if it satisfies
\begin{equation}\label{eq:Avgchain1}
\mu_Z([\tilde{X}_0]) > \mu_Z([\tilde{X}_1]) > \cdots > \mu_Z([\tilde{X}_{x(n)}]),
\end{equation} 
Similarly, $Z$ satisfies the sequence of inequalities \eqref{eq:inequality2} if and only if it satisfies
\begin{equation}\label{eq:Avgchain2}
\mu_Z([\tilde{Y}_0]) < \mu_Z([\tilde{Y}_1]) < \cdots < \mu_Z([\tilde{Y}_{y(n)}]).
\end{equation} 
\end{lemma}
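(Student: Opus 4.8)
The plan is to reduce the equivalence to a repeated application of the seesaw property (Lemma \ref{lem:seesaw}), exploiting the fact that $[\tilde X_i]$ is built from the disjoint pieces $[X_i], [X_{i+1}], \dots, [X_{x(n)}]$ in a way compatible with short exact sequences. First I would record the basic structural observation: for each $i$, the subsets $X_i$ and $\tilde X_{i+1}$ partition $\tilde X_i$, and — crucially — because the embedding of Notation \ref{not:embed} is monotone in the $x$-coordinate, every arrow of $Q$ either stays within a single $X_k$ or points from $X_k$ to $X_{k+1}$. Consequently the subspaces supported on $\tilde X_{i+1}$ form a subrepresentation of $[\tilde X_i]$ with quotient $[X_i]$, giving a short exact sequence
\begin{equation}\label{eq:Xses}
0 \to [\tilde X_{i+1}] \to [\tilde X_i] \to [X_i] \to 0
\end{equation}
in $\rep(Q)$. (One must check the arrow directions carefully here: an arrow from $X_i$ into $X_{i+1}$ would obstruct $[\tilde X_{i+1}]$ from being a subrepresentation, so this is exactly where the orientation convention $x(i+1)=x(i)+1$ for an arrow $i\to i+1$ is used, forcing arrows across the boundary to go the "wrong" way, i.e. from higher $x$ to lower $x$ — I would double-check the sign conventions against \eqref{eq:typeA} and Notation \ref{not:indecomp} before committing.)

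Granting \eqref{eq:Xses}, the argument is a clean induction. For the forward direction, assume \eqref{eq:inequality1}. I claim $\mu_Z([\tilde X_i]) > \mu_Z([\tilde X_{i+1}])$ for all $i$, proved by downward induction on $i$ starting from $i = x(n)$ (where $\tilde X_{x(n)} = X_{x(n)}$, base case trivial). Applying the seesaw property to \eqref{eq:Xses}: we know $\mu_Z([X_i]) > \mu_Z([X_{i+1}]) \geq \cdots$, and by a short sub-induction (using the $c$-version \eqref{eq:Zmin} of the seesaw lemma, or just iterating the seesaw equivalence on the filtration of $[\tilde X_{i+1}]$ by the pieces $[X_{i+1}], [X_{i+2}], \dots$) one gets $\mu_Z([X_i]) > \mu_Z([\tilde X_{i+1}])$. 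Then seesaw applied to \eqref{eq:Xses} with $\mu_Z([X_i]) > \mu_Z([\tilde X_{i+1}])$ yields $\mu_Z([\tilde X_i]) > \mu_Z([\tilde X_{i+1}])$, closing the induction and giving \eqref{eq:Avgchain1}. For the reverse direction, assume \eqref{eq:Avgchain1}; then from \eqref{eq:Xses} and seesaw, $\mu_Z([\tilde X_i]) > \mu_Z([\tilde X_{i+1}])$ forces $\mu_Z([X_i]) > \mu_Z([\tilde X_i]) > \mu_Z([\tilde X_{i+1}])$, and since $\mu_Z([\tilde X_{i+1}])$ is a "mixture" of $\mu_Z([X_{i+1}]), \dots, \mu_Z([X_{x(n)}])$ it in particular dominates... — here I need the slightly sharper statement that $\mu_Z([\tilde X_{i+1}]) \geq \min_{k \geq i+1}\mu_Z([X_k])$, again immediate from \eqref{eq:Zmin}, but to recover $\mu_Z([X_i]) > \mu_Z([X_{i+1}])$ specifically I would instead run the reverse induction simultaneously, peeling off $[X_i]$ at the top and using that by inductive hypothesis $\mu_Z([X_{i+1}]) > \mu_Z([X_{i+2}]) > \cdots$ so that $\mu_Z([\tilde X_{i+1}])$ sits strictly between $\mu_Z([X_{i+1}])$ and $\mu_Z([X_{x(n)}])$, whence $\mu_Z([X_i]) > \mu_Z([\tilde X_i]) > \mu_Z([\tilde X_{i+1}]) \geq \mu_Z([X_{x(n)}])$ combined with seesaw on $0 \to [\tilde X_{i+2}] \to [\tilde X_{i+1}] \to [X_{i+1}] \to 0$ pins down $\mu_Z([X_i]) > \mu_Z([X_{i+1}])$.

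The statement \eqref{eq:Avgchain2} for the $\tilde Y$-chain is entirely parallel; the cleanest route is to invoke Lemma \ref{lem:dual}, since passing to $Q^{\rm op}$ swaps the roles of $x$ and $y$ (an arrow $i \to i+1$ in $Q$ becomes $i+1 \to i$ in $Q^{\rm op}$), turning the $Y_k$ of $Q$ into the $X_k$ of $Q^{\rm op}$ and reversing all the inequalities — so \eqref{eq:inequality2} $\Leftrightarrow$ \eqref{eq:Avgchain2} for $Q$ is literally \eqref{eq:inequality1} $\Leftrightarrow$ \eqref{eq:Avgchain1} for $Q^{\rm op}$. I expect the main obstacle to be purely bookkeeping: getting the arrow-direction claim underlying \eqref{eq:Xses} exactly right (it is the one genuinely type-$\mathbb{A}$-and-orientation-dependent input), and organizing the nested inductions so that the "mixture" slopes $\mu_Z([\tilde X_{i+1}])$ are squeezed correctly between consecutive $\mu_Z([X_k])$'s — there is no hard mathematics here, only the risk of a circular or under-powered induction, which I would guard against by proving the strengthened two-sided claim "$\mu_Z([X_i]) > \mu_Z([\tilde X_{i+1}])$ and $\mu_Z([\tilde X_i]) > \mu_Z([\tilde X_{i+1}])$" at each inductive step rather than either half alone.
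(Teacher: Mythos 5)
Your short exact sequences $\ses{[\tilde{X}_{i+1}]}{[\tilde{X}_i]}{[X_i]}$ are correct and are exactly the filtration \eqref{eq:xyrepchains} underlying the paper's own proof (though your parenthetical worry is backwards: the obstruction to $[\tilde{X}_{i+1}]$ being a subrepresentation would be an arrow from $X_{i+1}$ \emph{to} $X_i$, i.e.\ one decreasing the $x$-coordinate, and the convention of Notation \ref{not:embed} rules precisely that out). Your forward direction, \eqref{eq:inequality1} $\Rightarrow$ \eqref{eq:Avgchain1}, is also fine: downward induction plus \eqref{eq:Zmin} gives $\mu_Z([X_i]) > \mu_Z([\tilde{X}_{i+1}])$, and the seesaw property converts this into $\mu_Z([\tilde{X}_i]) > \mu_Z([\tilde{X}_{i+1}])$. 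That is the implication actually needed to produce \eqref{eq:longchain} in the $\Leftarrow$ half of Theorem \ref{thm:main}.

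The reverse direction is a genuine gap, and it cannot be closed because that implication is false. At the end of your inductive step you know $\mu_Z([X_i]) > \mu_Z([\tilde{X}_{i+1}])$ (seesaw) and, from the inductive hypothesis, $\mu_Z([X_{i+1}]) > \mu_Z([\tilde{X}_{i+1}])$; both slopes sit above the mixture $\mu_Z([\tilde{X}_{i+1}])$ and nothing you have derived orders them against each other, so the asserted ``pins down $\mu_Z([X_i]) > \mu_Z([X_{i+1}])$'' does not follow. Concretely, take $Q=(1\to 2\to 3)$, $r=(1,1,1)$, $w=(7,10,0)$: then $\mu_Z([\tilde{X}_0])=17/3 > \mu_Z([\tilde{X}_1])=5 > \mu_Z([\tilde{X}_2])=0$, so \eqref{eq:Avgchain1} holds, while $\mu_Z([X_0])=7 < 10=\mu_Z([X_1])$, so \eqref{eq:inequality1} fails. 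For what it is worth, the paper's one-line proof (``apply Lemma \ref{lem:seesaw} to \eqref{eq:xyrepchains}'') only yields the term-by-term equivalences $\mu_Z([\tilde{X}_i]) > \mu_Z([\tilde{X}_{i+1}]) \Leftrightarrow \mu_Z([X_i]) > \mu_Z([\tilde{X}_{i+1}])$ and does not establish this direction either; as stated, only \eqref{eq:inequality1} $\Rightarrow$ \eqref{eq:Avgchain1} is true. The $\Rightarrow$ half of Theorem \ref{thm:main}, which the paper routes through the false direction, is better obtained directly, e.g.\ from stability of the indecomposables $[X_i \cup X_{i+1}]$.
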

\begin{proof}
The two filtrations of $Q_0$ in \eqref{eq:xychains} induce sequences of morphisms in $\rep(Q)$:
\begin{equation}\label{eq:xyrepchains}
\begin{split}
[\tilde{X}_{x(n)}] \subset [\tilde{X}_{x(n)-1}] \subset \dots \subset [\tilde{X}_{2}] \subset [\tilde{X}_{1}] \subset [\tilde{X}_{0}] = [Q_0]\\
[Q_0] = [\tilde{Y}_{0}] \onto [\tilde{Y}_{1}] \onto [\tilde{Y}_{2}] \onto \dots \onto [\tilde{Y}_{y(n)-1}] \onto [\tilde{Y}_{y(n)}].
\end{split}
\end{equation}
Applying Lemma \ref{lem:seesaw} finishes the proof.
\end{proof}

The following elementary observation is useful in carrying out proof by induction for the main theorem.

\begin{lemma}\label{lem:topsupports}
Let $Q$ be a type $\mathbb{A}_n$ quiver, and $V$ an indecomposable representation of $Q$ with $n \in \supp V$.  Then there exists $k$ such that either $V = [\tilde{X}_k]$ or $V = [\tilde{Y}_k]$.
\end{lemma}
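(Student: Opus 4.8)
The plan is to use the embedding $Q \subset \RR^2$ from Notation \ref{not:embed} and the fact that indecomposable representations of a type $\mathbb{A}_n$ quiver are exactly the $[S]$ for $S = \{i,\dots,j\}$ an interval. So let $V = [\{i, \dots, n\}]$ for some $i$, since $n \in \supp V$ forces the interval to have right endpoint $n$. I want to show that such an interval, when viewed inside $\RR^2$ via the coordinates $x, y$, is exactly one of the sets $\tilde X_k$ or $\tilde Y_k$, i.e. a set of vertices cut out by a single inequality $x(z) \geq k$ or $y(z) \geq k$.

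First I would recall the key monotonicity property of the embedding: as one walks from vertex $i$ to vertex $i+1$ along the underlying graph, exactly one of $x$ or $y$ increases by $1$ and the other stays constant (this is immediate from the defining recursion). Consequently, along the path $i, i+1, \dots, n$, both $x$ and $y$ are (weakly) monotonically increasing, and $x(n), y(n)$ are the maximal values of $x, y$ on all of $Q_0$. Now consider the last step of the walk from $1$ to $n$, i.e. the arrow between $n-1$ and $n$: either it is $n-1 \to n$, in which case $x(n) = x(n-1)+1$ and $x(n) > x(z)$ for every $z \neq n$ reachable before $n$; more relevantly, I'd look at the first step \emph{into} the interval $\{i, \dots, n\}$ from below.

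The cleanest argument: set $k_x := x(i)$ and observe that the interval $\tilde X_{k_x} = \{z : x(z) \geq k_x\}$ and $\tilde Y_{k_y} = \{z : y(z) \geq k_y\}$ with $k_y := y(i)$ are both intervals containing $\{i,\dots,n\}$ (by monotonicity of $x,y$ along $1,\dots,n$, everything with $x$-coordinate $\geq x(i)$ lies in $\{i, \dots, n\}$ \emph{or beyond}—but there is nothing beyond $n$, so actually $\tilde X_{k_x}, \tilde Y_{k_y} \supseteq \{i,\dots,n\}$, with the reverse inclusion $\{z : x(z)\geq x(i)\} \subseteq \{i,\dots,n\}$ holding because $z < i$ implies $x(z) \le x(i)$, with equality possible). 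The point is that $\{i, \dots, n\} = \tilde X_{k_x}$ precisely when $x(i-1) < x(i)$, i.e. when the arrow between $i-1$ and $i$ is oriented $i-1 \to i$; and $\{i,\dots,n\} = \tilde Y_{k_y}$ precisely when $y(i-1) < y(i)$, i.e. when that arrow is $i \to i-1$. Since one of these two cases must hold (there is an arrow between $i-1$ and $i$, and it increments exactly one coordinate), we are done; the edge case $i = 1$ gives $V = [Q_0] = [\tilde X_0] = [\tilde Y_0]$.

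The main obstacle — really the only thing requiring care — is pinning down exactly why $\{z \in Q_0 : x(z) \geq x(i)\}$ equals $\{i, \dots, n\}$ in the first case (and the $y$-analogue), i.e. handling the vertices $z$ with $z < i$ and confirming $x(z) < x(i)$ for all of them when the arrow $i-1 \to i$ points rightward, together with confirming $x(z) \geq x(i)$ for all $z \geq i$. Both follow from the strict/weak monotonicity of $x$ along the linear graph once I argue that $x$ restricted to $\{1, \dots, i\}$ attains its maximum at $i$ and the maximum is strict exactly under the orientation hypothesis; symmetrically for $y$. This is a short induction on the recursion in Notation \ref{not:embed} and I would state it as a one-line observation rather than belabor it.
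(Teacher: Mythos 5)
Your proof is correct, and since the paper states this lemma as an ``elementary observation'' without supplying a proof, your argument is exactly the intended one: identify $V$ with an interval module $[\{i,\dots,n\}]$, note that $\tilde X_k=\{z: x(z)\ge k\}$ and $\tilde Y_k=\{z: y(z)\ge k\}$ are up-sets by weak monotonicity of $x,y$ along $1,\dots,n$, and observe that the orientation of the arrow between $i-1$ and $i$ decides which of the two coordinates jumps strictly at $i$. The case split (including $i=1$) is complete and the monotonicity claims follow immediately from the recursion in Notation \ref{not:embed}, so nothing further is needed.
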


\subsubsection*{Proof of ``if and only if'' statement of Theorem \ref{thm:main}}
The $\Rightarrow$ direction of follows immediately from Lemma \ref{lem:chains} and the sequences of morphisms in \eqref{eq:xyrepchains}.

For the $\Leftarrow$ direction, we assume that $Z$ is given such that the inequalities \eqref{eq:inequality1} and  \eqref{eq:inequality2} all hold, and we need to show that $Z \in \mathcal{TS}(Q)$.
By Lemma \ref{lem:indecomp}, we are reduced to showing $\mu_Z(W) < \mu_Z(V)$ for all $0 <W<V$ with both $W, V$ indecomposable.
We observe for later that since $\tilde{X}_{0} = Q_0 = \tilde{Y}_{0}$, we may concatenate the chains \eqref{eq:Avgchain1} and \eqref{eq:Avgchain2} to obtain:
\begin{equation}\label{eq:longchain}
\mu_Z([\tilde{X}_{x(n)}])< \cdots < \mu_Z([\tilde{X}_{1}]) < \mu_Z([Q_0]) < 
\mu_Z([\tilde{Y}_{1}]) < \cdots < \mu_Z([\tilde{Y}_{y(n)}]).
\end{equation}

We use induction on the number of vertices of $Q$, with the statement being vacuously true in the base case $n=1$ (the unique indecomposable is stable with respect to any $Z$, and there are no inequalities to satisfy). Let $Q$ be a type $\mathbb{A}_n$ quiver and assume the theorem is true for type $\mathbb{A}_{n-1}$ quivers. 
The primary challenge in the induction is that the collection of inequalities \eqref{eq:inequality1}, \eqref{eq:inequality2} for $Q$ does not simply restrict to the corresponding collection of inequalities for smaller quivers, so we cannot easily apply the induction hypothesis.

Consider the arrow $n-1 \to n$ in $Q$: we can assume $n$ is a sink without loss of generality because Lemma \ref{lem:dual} gives us the $n-1 \leftarrow n$ case from this by reversing the directions of all inequalities, noting that the sets $X_k$ and $Y_k$ are interchanged when switching between $Q$ and $Q^{\rm op}$.
Thus we have
\[
x(n) = x(n-1)+1 \quad \text{and} \quad y(n) = y(n-1).
\]
Let $\overline{Q}$ be the quiver obtained by removing vertex $n$ and the arrow connected to it, and $\bar Z \in \Hom_\ZZ(K_0(\overline{Q}), \CC)$ be the restriction of $Z$ to $K_0(\overline{Q})$. 
We furthermore institute superscripts to distinguish between objects associated to $Q$ and $\overline{Q}$, whenever necessary.

To apply the induction hypothesis to $\overline{Q}$, we need to show that $\bar Z$ satisfies the sequences of inequalities in \eqref{eq:inequality1} and \eqref{eq:inequality2} associated to $\overline{Q}$, namely:
\begin{equation}\label{eq:barinequality1}
\mu_{\bar Z}([X^{\overline{Q}}_0]) > \mu_{\bar Z}([X^{\overline{Q}}_1]) > \cdots > \mu_{\bar Z}([X^{\overline{Q}}_{x(n-1)}]),
\end{equation}
\begin{equation}\label{eq:barinequality2}
\mu_{\bar Z}([Y^{\overline{Q}}_0]) < \mu_{\bar Z}([Y^{\overline{Q}}_1]) < \cdots < \mu_{\bar Z}([Y^{\overline{Q}}_{y(n-1)}]).
\end{equation}
Whenever $n \notin S\subseteq Q_0$, the function $\mu_Z([S])$ is independent of the $n^{\rm th}$ coordinate, and can thus be identified with the function $\mu_{\bar Z}([S])$ of $\bar Z$ on the space of stability conditions for $\overline{Q}$.  Since $X^{\overline{Q}}_k =X^{Q}_k$ for $0 \leq k \leq x(n-1)$ and $Y^{\overline{Q}}_k =Y^{Q}_k$ for $0 \leq k \leq y(n-1)-1$, we know $\bar Z$ satisfies all the inequalities in \eqref{eq:barinequality1} and \eqref{eq:barinequality2}, except perhaps the far right inequality of \eqref{eq:barinequality2}, where we must deal with the fact that $Y^{\overline{Q}}_{y(n-1)} = Y^Q_{y(n-1)} \setminus\{n\}$.

Thus to apply the induction hypothesis, it remains to show that the far right inequality of \eqref{eq:barinequality2} holds, which can be written as
\begin{equation}\label{eq:IHneeded}
\mu_{\bar Z}([Y^{\overline{Q}}_{y(n)-1}])  < \mu_{\bar Z}([Y^{\overline{Q}}_{y(n)}]),
\end{equation}
where we use $y(n-1)= y(n)$ to simplify the notation here and below.
Recalling that $\tilde{X}^Q_{x(n)} = \{n\}$ since $n$ is a sink, from \eqref{eq:longchain} we can extract
\begin{equation}\label{eq:IH1}
\mu_Z([\{n\}])=\mu_Z([\tilde{X}^Q_{x(n)}]) < \mu_Z([\tilde{Y}^Q_{y(n)-1}]).
\end{equation}
From the short exact sequence
$\ses{[\{n\}]}{[\tilde{Y}^Q_{y(n)-1}]}{[\tilde{Y}^{\overline{Q}}_{y(n)-1}]}$ and \eqref{eq:IH2} we obtain
\begin{equation}\label{eq:IH2}
\mu_Z([\tilde{Y}^Q_{y(n)-1}]) < \mu_Z([\tilde{Y}^{\overline{Q}}_{y(n)-1}]).
\end{equation}
From the short exact sequence
$\ses{[Y^Q_{y(n)-1}]}{[\tilde{Y}^Q_{y(n)-1}]}{[Y^Q_{y(n)}]}$ 
and \eqref{eq:inequality2} we obtain
\begin{equation}\label{eq:IH3}
\mu_Z([Y^Q_{y(n)-1}]) < \mu_Z([\tilde{Y}^Q_{y(n)-1}]).
\end{equation}
Finally, from the short exact sequence
$\ses{[Y^Q_{y(n)-1}]}{[\tilde{Y}^{\overline{Q}}_{y(n)-1}]}{[Y^{\overline{Q}}_{y(n)}]}$ 
and combining \eqref{eq:IH3} then \eqref{eq:IH2} we obtain
\begin{equation}\label{eq:IH4}
\mu_Z([Y^Q_{y(n)-1}]) < \mu_Z([Y^{\overline{Q}}_{y(n)}]).
\end{equation}
Since, $Y^Q_{y(n)-1} = Y^{\overline{Q}}_{y(n)-1}$,
restricting the domain to $\bar{Z}$ gives exactly the inequality \eqref{eq:IHneeded} we set out to show in this paragraph.

Now by the induction hypothesis, $\bar Z$ satisfies all inequalities $\mu_{\bar Z}(W) <\mu_{\bar Z}(V)$ for $V$ an indecomposable representation of $\overline{Q}$ and $0 \neq W < V$.  
This means $Z$ satisfies all such inequalities when $n \notin \supp V$.
So it remains to consider indecomposable $V$ which are supported at vertex $n$.

\medskip

We first consider the case when $n \in \supp W$ as well, setting out to show $\mu_Z(W) < \mu_Z(V)$.
Let $[n]:=[\{n\}]$ and $\overline{W} := W/[n]$ and $\overline{V} :=V/[n]$.
The case where $W=[n]$ follows from the chain \eqref{eq:longchain} since its least term is $\mu_Z([n])$, and $\mu_Z(V)$ must appear in this chain by Lemma \ref{lem:topsupports}.
So we may assume now that $\overline{W} \neq 0$.
By the seesaw property, it is enough to show $\mu_Z(W) < \mu_Z(V/W)=\mu_Z(\overline{V}/\overline{W})$, and for this it is enough to show both:
\begin{equation}
\mathrm{(i)}\ \mu_Z(\overline{W}) < \mu_Z(\overline{V}/\overline{W})
\qquad \text{and}\qquad 
\mathrm{(ii)}\ \mu_Z([n]) < \mu_Z(\overline{V}/\overline{W})
\end{equation}
by the second statement of Lemma \ref{lem:seesaw} applied to the short exact sequence
$\ses{[n]}{W}{\overline{W}}$.
Inequality (i) is immediate from the induction hypothesis since $[\overline{W}]$ is a subrepresentation of $\overline{V}$.
For (ii), we have from the far right inequality of \eqref{eq:inequality1} that $\mu_Z([n]) < \mu_Z(X^Q_{x(n)-1})$.  Then noting $X^Q_{x(n)-1}=X^{\overline{Q}}_{x(n)-1}$, applying the induction hypothesis to the chain of subrepresentations below yields:
\begin{equation}
[X^{\overline{Q}}_{x(n)-1}] \leq \overline{W} < \overline{V}
\quad \Rightarrow\quad
\mu_Z([n]) < \mu_Z(\overline{W}) < \mu_Z(\overline{V}).
\end{equation}
Since $\mu_Z(\overline{V}) < \mu_Z(\overline{V}/\overline{W})$ by the seesaw property, inequality (ii) is shown, completing the case $n \in \supp W$.

\medskip

We now consider the case of pairs $W<V$ when $n \notin \supp W$ but $n\in \supp V$.  
Fix such a $V$, noting $V$ has the representation $[Y^Q_{y(n)}]$ as a proper, nonzero quotient in order for there to exist nonzero $W < V$ without $n$ in its support (i.e. otherwise $V$ would be uniserial with socle $[n]$).  
Recalling $Y^Q_{y(n)}=\tilde{Y}^Q_{y(n)}$, the chain of inequalities \eqref{eq:longchain} and Lemma \ref{lem:topsupports} imply that $\mu_Z(V) < \mu_Z([Y^Q_{y(n)}])$.
Now we have a short exact sequence
\begin{equation}
\ses{W_0}{V}{Y^Q_{y(n)}}
\end{equation}
where $W_0=[\supp V \setminus Y^Q_{y(n)}]$ is the unique maximal subrepresentation of V not supported at $n$, so the inequality just shown gives $\mu_Z(W_0) < \mu_Z(V)$ by the seesaw property.
The induction hypothesis then implies that $\mu_{Z}(W)$ is maximized at $W_0$, as $W$ runs over all subrepresentations of $V$ not supported at $n$, so we have shown that $\mu_Z(W) < \mu_Z(V)$ and the proof of the ``if and only if'' part of the main theorem is completed. \qed

\subsubsection*{Proof of minimality}
To prove minimality, we need to use that each fiber $\mathcal{TS}_{r}(Q)$ is nonempty. This is proven independently to this proof in Corollary \ref{cor:nonempty} below, so let us assume it for now.  For a fixed $r \in \RR_{>0}^{Q_0}$, we have just proven that the $n-1$ inequalities \eqref{eq:inequality1} and \eqref{eq:inequality2} cut out the cone $\mathcal{TS}_{r}(Q)\subset \RR^{Q_0}$.  If any of them could be omitted, then $\mathcal{TS}_r(Q)$ could be represented as the intersection of $n-2$ or fewer linear half spaces.
But then $\mathcal{TS}_r(Q)$ would contain a translate of a two-dimensional subspace of $\RR^{n}$, contradicting Lemma \ref{lem:lineality}. \qed

\subsubsection*{Proof of Corollary \ref{cor:nonempty}} 
 This proof is due to Hugh Thomas.  We begin by setting
\begin{equation}
x_i :=\sum_{k \in X_i^Q} r_k , \qquad y_i :=\sum_{k \in Y_i^Q} r_k, \qquad \tilde{x}_i := \sum_{k=1}^i x_k, \qquad \tilde{y}_i := \sum_{k=1}^i y_k.
\end{equation}
For $Z$ as in \eqref{eq:Zform} for $r$ fixed, we consider the linear functions of $w$ defined by 
\begin{equation}
\begin{split}
f_i(w) &:= \mu_{Z}([X_i^Q]) - \mu_{Z}([X_{i+1}^Q]), \qquad 1 \leq i \leq M:=\max\{i : X_{i+1}^Q \neq \emptyset \}\\
g_j(w) &:= \mu_{Z}([Y_{j+1}^Q]) - \mu_{Z}([Y_{j}^Q]), \qquad 1 \leq j \leq N:=\max\{j : Y_{j+1}^Q \neq \emptyset \}.
\end{split}
\end{equation}
Our main theorem says that $Z\in \mathcal{TS}_r(Q)$ if and only if these functions are all strictly positive on the weight.  
We can assume that both $M, N \geq 1$, since otherwise the quiver is equioriented and the corollary is immediate \cite[Example~A]{ReinekeHNsystem}.

If $\mathcal{TS}_r(Q)$ were empty, then by Farkas' lemma (see for example \cite[\S5.8.3]{BV04}) there would exist a linear combination with nonnegative coefficients 
\begin{equation}\label{eq:zerolincomb}
0= \sum_{i=1}^M a_i f_i(w) + \sum_{j=1}^N b_j g_j(w), \quad a_i, b_j \in \mathbb{R}_{\geq 0},
\end{equation}
where some $a_i \neq 0$ for $1 \leq i \leq M$ or some $b_j \neq 0$ for $1 \leq j \leq N$.
Assume for contradiction that we have such an expression, and take one for which $Q$ has a minimal number of vertices.  We will successively consider the coefficients of $w_1, w_2, w_3, \dotsc$ and show that (up to a scalar multiple) the vanishing of these coefficients forces $a_i = \tilde{x}_i$ and $b_j = \tilde{y}_j$ up to a point, and then yields a contradiction when considering the coefficient of $w_t$ when either $x(t)$ or $y(t)$ is maximal (i.e., in Notation \ref{not:embed}, when we reach a vertex in the furthest right column of vertices or furthest up row of vertices).

First consider the coefficient of $w_1$.  Assume $1 \to 2$ in $Q$ (without loss of generality by the same application of Lemma \ref{lem:dual} used in the proof of the main theorem).  This variable appears only in $f_1(w)$ and $g_1(w)$, and the coefficient of $w_1$ in \eqref{eq:zerolincomb} is $\frac{a_1}{x_1} - \frac{b_1}{y_1}$.  Up to a scalar, we are forced to take $a_1 = r_1 = \tilde{x}_1$ and $b_1 = y_1 = \tilde{y}_1$.  

Proceeding inductively up the indices for $w$, we next consider the coefficient of $w_t$ for $1< t< n$ but $y(t)=1$ still (i.e., we have a path $1 \to 2\to \cdots \to t$ in $Q$).
The coefficient of $w_t$ in \eqref{eq:zerolincomb} receives contributions from (at most) 
$f_{t-1}(w),\ f_t(w),\ g_1(w)$.
If $x(t)$ is not maximal, then $t \leq M$  and for \eqref{eq:zerolincomb} to hold we need
\begin{equation}\label{eq:cor1}
0 = a_{t-1}\frac{-1}{x_t} + a_t \frac{1}{x_t} + b_{1}\frac{-1}{y_{1}}.
\end{equation}
By induction we already have $a_{t-1} =\tilde{x}_{t-1}(=t-1)$ and $b_1 = \tilde{y}_1$, so a direct substitution into the above expression yields
\begin{equation}
\frac{1}{x_t}(a_t - \tilde{x}_{t-1}) -1 = 0,
\end{equation}
forcing $a_{t} = \tilde{x}_{t-1}+x_{t} = \tilde{x}_{t}$.
However, if $x(t)$ is maximal, then $M=t-1$ so for \eqref{eq:zerolincomb} to hold we need
\begin{equation}
a_{t-1}\frac{-1}{x_t} + b_{1}\frac{-1}{y_{1}} = 0,
\end{equation}
which is a contradiction since both terms of the left hand side are negative.

Continuing up the indices, consider the general situation of $t\in Q_0$ such that both $k:=x(t)>1$ and $l:=y(t)>1$ and neither is maximal among vertices of $Q$.
The coefficient of $w_t$ in \eqref{eq:zerolincomb} receives contributions from 
$f_{k-1}(w),\, f_k(w),\, g_{l-1}(w),\, g_{l}(w)$, and \eqref{eq:zerolincomb} implies
\begin{equation}\label{eq:corproof2}
a_{k-1}\frac{-1}{x_k} + a_k \frac{1}{x_k} + b_{l-1}\frac{1}{y_l} + b_l\frac{-1}{y_l} = 0.
\end{equation}
By induction on $t$ we already have $a_{k-1} =\tilde{x}_{k-1}$ and $b_{l-1} = \tilde{y}_{l-1}$, and either $a_k = \tilde{x}_k$ (if $t-1 \leftarrow t$ in $Q$)
or $b_l = \tilde{y}_l$ (if $t-1 \to t$ in $Q$), so the remaining coefficient is determined in \eqref{eq:corproof2}.
In the case that $t-1\leftarrow t$ in $Q$, direct substitution into the above expression yields
\begin{equation}
1  + \frac{1}{y_{l}}(\tilde{y}_{l-1} - b_l) = 0,
\end{equation}
forcing $b_l = \tilde{y}_{l-1}+y_l = \tilde{y}_l$.  The case that $t-1 \to t$ in $Q$ is similar.

At some point we arrive at $t\in Q_0$ such that either $k$ or $l$ is maximal, say $k$ (again the other case is similar).  Then the arrows of $Q$ are oriented like $t-1 \to t \leftarrow \cdots \leftarrow n$.  The coefficient of $w_t$ has one fewer term and is by induction equal to
\begin{equation}
a_{k-1}\frac{-1}{x_k} +  b_{l-1}\frac{1}{y_l} + b_l\frac{-1}{y_l} = \tilde{x}_{k-1}\frac{-1}{x_k} -1 < 0,
\end{equation}
thus nonvanishing.  This is the desired contradiction and the corollary is proven.
\qed

\bigskip

We illustrate the main theorem by continuing our running example.

\begin{example}\label{ex:2}
Continuing Example \ref{ex:1}, Theorem \ref{thm:main} says that the minimal set of inequalities in the variable $Z$ which define $\mathcal{TS}(Q)$ is:
\begin{equation}\label{eq:exineq1}
\mu_Z([1]) > \mu_Z([2]) > \mu_Z([3,4]) > \mu_Z([5]) > \mu_Z([6,7]) > \mu_Z([8]),
\end{equation}
\begin{equation}\label{eq:exineq2}
\mu_Z([1,2,3]) < \mu_Z([4,5,6]) < \mu_Z([7,8]).
\end{equation}
Taking coordinates $w_1, \dotsc, w_8, r_1, \dotsc r_8$ on $\RR^{Q_0} \times (\RR_{>0})^{Q_0}$, these are explicitly
\begin{equation}
\frac{w_1}{r_1} > \frac{w_2}{r_2} > \frac{w_3+w_4}{r_3+r_4} > \frac{w_5}{r_5} > \frac{w_6+w_7}{r_6+r_7} > \frac{w_8}{r_8}
\end{equation}
\begin{equation}
\frac{w_1+w_2+w_3}{r_1+r_2+r_3} < \frac{w_4+w_5+w_6}{r_4+r_5+r_6}  < \frac{w_7+w_8}{r_7+r_8},
\end{equation}
and admit a solution in $w$ for any choice of $r$.
\end{example}

\bibliographystyle{alpha}
\bibliography{typeAstability}

\end{document}